\renewcommand{\fnum@algorithm}{\fname@algorithm}
\newcommand{\ba}{\begin{array}}
	\newcommand{\ea}{\end{array}}
\newcommand{\bdm}{\begin{displaymath}}
\newcommand{\edm}{\end{displaymath}}
\newcommand{\be}{\begin{equation}}
\newcommand{\ee}{\end{equation}}
\newcommand{\bea}{\begin{eqnarray}}
\newcommand{\eea}{\end{eqnarray}}
\newcommand{\beax}{\begin{eqnarray*}}
	\newcommand{\eeax}{\end{eqnarray*}}
\newtheorem{theorem}{Theorem}[section]
\newtheorem{lemma}[theorem]{Lemma}
\newtheorem{proposition}[theorem]{Proposition}
\title{On a Boundary Updating Method for the Scalar Stefan Problem}
\author{Evangelos F. Magirou} \ead{efm@aueb.gr} 
\author{Paraskevas Vassalos} \ead{pvassal@aueb.gr}
\author{Nikolaos Barakitis} \ead{nickbar@aueb.gr}
\address{Department of Informatics, Athens University of Economics and Business, Patission 76, 10434 Athens, Greece.}
\begin{document}
\begin{abstract}
	We report on a general purpose method for the scalar Stefan problem inspired by the standard boundary updating method used in several existence proofs.  By suitably modifying it we can solve numerically any kind of Stefan problem. We present a theoretical justification of the method and several computational results.
\end{abstract}

\maketitle

\section{Introduction}
In the numerical solution of one dimensional phase change (Stefan) problems involving an infinite rod of ice at zero temperature one usually considers a zero thickness liquid region on which the computation is initiated.  A key issue is how to initiate this computation; a common approach is to determine a starting solution analytically and input the results to a numerical scheme after a small, arbitrary time interval \cite{MiVyn}.  Thus a boundary immobilization scheme \cite{Andreu} can be implemented starting from a definite initial temperature distribution. A criticism to this approach is that a singularity is artificially introduced in the physical equations.  However \cite{MiVyn} shows that by a modification of this method self similar solutions for the entire region can be determined, and demonstrate the validity of their method for several cases well known in the literature.

In this work we report on the application of a boundary updating method inspired by the one used in proving the existence of a solution to the phase change problem as outlined in \cite{Andreu}. One considers a Stefan problem with given, positive heat flux at the initial point. An operator is introduced modifying phase boundaries and such that its fixed points are solutions to the Stefan problem. The operator is essentially the integral of the Stefan condition. One then shows that the operator is continuous in the appropriate norm and hence a fixed point exists. It is also shown that the operator is a contraction for small horizons and hence it potentially provides a method to numerically identify the boundary.  This operator approach is not applicable to the Stefan problem whose boundary condition is specified by a temperature function at the origin since a fundamental property  is not valid in this case and a different proof is given \cite{Friedman}. 

We have reexamined this method with a modification that improves its contraction features, and in fact we show that if the original operator is bounded, an appropriate modification gives a contraction.  This relies on a reversal type property, namely that if a function dominates another, their transforms have an inverted domination property. This shows that starting with a function dominated by the fixed point repeated applications of the operator result in oscillations around the fixed point. Then reasoning as in the scalar case a reduction in the amplitude of the oscillations is possible if instead of the original operator we consider one that is a weighted average of the identity and the operator.  Applying this straightforward idea to standard cases in the literature \cite{Andreu},\cite{MiVyn} we were able to obtain the analytic  without using the extra analysis required therein. Also novel examples were solved giving results consistent with the Stefan condition.  These example were of both boundary types.  Furthermore we were able to solve problems with a Stefan condition corresponding to inhomogeneous ice. We have some results indicating why the method is successful but these rely on reasonable but unproven assumptions.

The next section (Section \ref{FreeBdyPbm}) presents the Stefan problems with particular attention paid to the variations in the boundary conditions and the homogeneity mentioned earlier as well as some relevant results from the literature, especially the operators used in the existence proof in \cite{Andreu}. Section \ref{Iterative} presents the main iterative methods introduced and their justification.  Section \ref{Numerical} includes  the numerical schemes used.  In Section  \ref{AltIterative} we present some iterative methods of a different philosophy based on an optimization approach as in our previous work \cite{AmPut}, show how to implement them and present an argument for their convergence.  Conclusions and further work plans are in the final Section \ref*{Conclusions}.

\section{Statement of the Free Boundary Problem}\label{FreeBdyPbm}
We follow \cite{Andreu} and \cite{MiVyn} in formulating the one dimensional Stefan problem.  The space parameter is $x$ and the time one $t$.  Consider the melting of ice, initially occupying the half line $x\ge0$ as a rod of infinitesimal thickness which is then heated at $x=0$.  The liquid region at time $t$ is from zero to $s(t)$ and we denote it by $Q_s=\{(x,t)|0< x<s(t)\}$.  We are interested in the temperature distribution $U(x,t)$  in the liquid region $Q_s$,  and the solid - liquid interface or boundary $s(t)$. The interface is not known and must be determined so that it satisfies a condition reflecting the latent heat of the melting of ice. In the liquid phase $Q_s$ the temperature $U(x,t)$ satisfies the heat equation
\begin{equation}
\frac{\partial{U}}{\partial{t}}(x,t)=\frac{\partial^2{U}}{\partial{x^2}}(x,t), \qquad 0<x<s(t), \: t>0  \label{eq:Heat}
\end{equation}
while in the solid phase $Q^c_s=\{(x,t)|x\ge s(t)\}$ we assume that the temperature is everywhere at zero. One can consider the case where there exists initially a liquid phase interval at zero temperature, in which case the problem simplifies; we will assume no such interval.   We thus have the initial conditions 
\begin{align}
&s(0)=0, \label{eq:Initial1}\\
&U(0,0)=0\label{eq:Initial2}\\
&U(s(t),t)=0\label{eq:Initial3}
\end{align}

 On the liquid-solid boundary the heat of melting must equal the heat transfer due to the temperature gradient.  Thus if $L=L(x)$ is the latent heat per unit volume, $k=k(x)$ the diffusivity coefficient of the liquid at position $x$ then $-kU_x(s(t),t)\delta t=L\delta s$; setting $\beta=L/k$ we obtain the Stefan condition
\begin{equation}\label{Stefan}
\beta(s(t)) \frac{ds}{dt} = -\frac{\partial{U}}{\partial{x}}(s(t)t). 
\end{equation}

In case $L,k$ depend on the position $x$, a situation that might occur if there are impurities in the solid phase,  there is a space dependence $\beta=\beta(x)$.  We will mostly work with a constant $\beta$ but our numerical methods work for variable $\beta$'s.  

We consider as in the literature two modes of heating, a Dirichlet and a Neumann form and consider two types of condition at $x=0$
\begin{align}
&U(0,t)=g(t),&Dirichlet\; boundary     \label{eq:Dirichlet boundary} \\       
&\frac{\partial{U}}{\partial{x}}(0,t) = h(t)\ge0\qquad &Neumann\; boundary\label{eq:Neumann boundary}
\end{align}
As customary we assume $g$ to be nonnegative and $h$ to be positive.

Let $Q_{s,T}=\{(x,t)|0< x< s(t),0< t< T\}$ and $\overline{Q_{s,T}}$ be the closure of $Q_{s,T}$ .  A solution of the problem consisting of \eqref{eq:Heat},\eqref{eq:Initial1}-\eqref{eq:Initial3},\eqref{Stefan},and either \eqref{eq:Dirichlet boundary} or \eqref{eq:Neumann boundary} in the horizon from zero to $T$ i.e. $Q_{s,T}$ is a pair $(U,s)$ with $s\in C^1((0,T])\cap C([0,T]),\;s(0)=0,\;s\ge0,\;0\le t \le T, \; U\in C(\overline{Q_{s,T}}) \cup C^{2,1}(\overline{Q_{s,T}}),\;U_x \in C(\overline{Q_{s,T}}-\{t=0\})$.  For constant $\beta$ there are standard existence and uniqueness theorems for both the Dirichlet and the Neumann condition as long as the heat flux $h$ is positive and the temperature is nonnegative at the origin \cite{Andreu},\cite{Friedman}, but the proof of the Neumann case is more relevant to our work. 

In the existence proofs the following construction - transformation proves useful \cite{Andreu}:  One considers a function $s$ serving as a candidate for the interface (and hence $s(0)=0$) and then solves a heat equation problem without the Stefan condition and $s$ as the interface. Namely one considers the solution $U^s(x,t)$ of \eqref{eq:Heat},\eqref{eq:Initial1}-\eqref{eq:Initial3} and either \eqref{eq:Dirichlet boundary} or \eqref{eq:Neumann boundary} from zero to $T$ i.e. in the domain $Q_{s,T}$.  We will refer to  the problem of finding a solution for given $s$ and a Dirichlet condition \eqref{eq:Dirichlet boundary} as the \textbf{Fixed Boundary Dirichlet problem}. Finding a solution $U(x,t) \equiv U^s(x.t)$  for $s$ and a Neumann condition  \eqref{eq:Neumann boundary} will be called the \textbf{Fixed Boundary Neumann problem}

One then modifies the boundary $s$ by transforming it through:
\begin{equation}\label{RTransform}
\mathcal{R}(s)(t)=-\int_{0}^{t}U^s_x(s(z),z)dz\qquad0\le t\le T.
\end{equation}
It is easily verified that a fixed point $s^*$ of \eqref{RTransform} is a solution to the Stefan problem.  Now, as in  \cite{Andreu}, using the divergence theorem for the identically zero function $U_{xx}-U_t$ on $Q_{s,T}$ we obtain 
\begin{equation}\label{ModRTransform}
\mathcal{R}(s)(t)=-\int_{0}^{t}U^s_x(0,z)dz-\int_{0}^{s(t)}U^s(x,t)dx \qquad0\le t\le T
\end{equation}
For Neumann type problems the transform becomes
\begin{equation}\label{ModRTransformNeum}
\mathcal{R}(s)(t)=\int_{0}^{t}h(z)dz-\int_{0}^{s(t)}U^s(x,t)dx
\end{equation}
and it can be then shown that $\mathcal{R}$ is continuous in the appropriate norm;  by Schauder's theorem stated in \cite{Andreu} it has a fixed point which is the required solution. Moreover it is also shown in the same reference that  it is a contraction for a small enough horizon $T$ leading thus to a constructive existence proof.  This argument is not applicable for the Dirichlet problem \eqref{eq:Dirichlet boundary} since the expression \eqref{ModRTransform} does not lead to \eqref{ModRTransformNeum} which is crucial for proving continuity of $\mathcal{R}$. A different approach is used to show existence for this case \cite{Friedman}. However we will show that $\mathcal{R}$  after its modification by the divergence theorem \eqref{ModRTransform} has additional properties that  can serve as a basis for a general purpose iterative algorithm, as we show next.

\section{A modified iterative algorithm for free boundary problems}\label{Iterative}

A very useful property of $\mathcal{R}$ is stated as a n exercise in \cite{Andreu}.  We state and prove it next.
\begin{lemma}\label{reversal} \textbf{(Reversal lemma)}
	
	\noindent Consider the Fixed Boundary Neumann Problem as defined in the previous section (Namely the solution $U^s$ of  \eqref{eq:Heat},\eqref{eq:Initial1}-\eqref{eq:Initial3} and the Neumann condition \eqref{eq:Neumann boundary}) and assume that it has a solution for any smooth $s$ starting at the origin. If $s_1,s_2$ are boundary functions such that $s_1(t)\le s_2(t),\;t \in [0,T]$ then  $\mathcal{R}(s_1)(t)\ge \mathcal{R}(s_2)(t)$. 
\end{lemma}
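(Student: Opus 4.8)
The plan is to read the inequality off the divergence-theorem form \eqref{ModRTransformNeum} of $\mathcal{R}$, which reduces the statement to two comparison properties of the fixed boundary Neumann solutions $U^{s_1},U^{s_2}$. Since $s_1\le s_2$ on $[0,T]$ we have $Q_{s_1,T}\subseteq Q_{s_2,T}$, so $U^{s_2}$ is also defined on $\overline{Q_{s_1,T}}$. Subtracting the two instances of \eqref{ModRTransformNeum} cancels the common term $\int_0^t h(z)\,dz$, and splitting the $s_2$-integral at $s_1(t)$ gives
\[
\mathcal{R}(s_1)(t)-\mathcal{R}(s_2)(t)=\int_{0}^{s_1(t)}\bigl(U^{s_2}(x,t)-U^{s_1}(x,t)\bigr)\,dx+\int_{s_1(t)}^{s_2(t)}U^{s_2}(x,t)\,dx .
\]
It therefore suffices to show (i) $U^{s_2}\ge0$ on $\overline{Q_{s_2,T}}$, and (ii) $U^{s_2}\ge U^{s_1}$ on the smaller cylinder $\overline{Q_{s_1,T}}$; both integrands are then nonnegative and $\mathcal{R}(s_1)(t)\ge\mathcal{R}(s_2)(t)$ follows.

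For (i) I would apply the parabolic maximum principle to $v:=-U^{s_2}$. It solves the heat equation on $Q_{s_2,T}$; its parabolic boundary is the point $(0,0)$ together with the lateral curves $x=0$ and $x=s_2(t)$; by \eqref{eq:Initial2}--\eqref{eq:Initial3} we have $v=0$ at $(0,0)$ and on $x=s_2(t)$; and on $x=0$ the Neumann datum gives $v_x(0,t)=h(t)\ge0$ (i.e. $-U^{s}_x(0,t)=h(t)\ge0$, the sign consistent with \eqref{ModRTransform}--\eqref{ModRTransformNeum}), so the outward normal derivative of $v$ at $x=0$ is $-v_x(0,t)\le0$. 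If $\max v>0$ it could only be attained at some $(0,t_0)$ with $t_0>0$, and there Hopf's boundary point lemma would force $-v_x(0,t_0)>0$, a contradiction. Hence $v\le0$, i.e. $U^{s_2}\ge0$; this is simply the standard nonnegativity of the fixed boundary Neumann solution and may alternatively be quoted.

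For (ii) I would look at $\phi:=U^{s_2}-U^{s_1}$ on $Q_{s_1,T}$. It solves the heat equation; $\phi=0$ at $(0,0)$; on $x=s_1(t)$ one has $\phi(s_1(t),t)=U^{s_2}(s_1(t),t)\ge0$ by (i); and on $x=0$ the two solutions carry the same Neumann datum, so $\phi_x(0,t)=0$. The vanishing Neumann datum lets me extend $\phi$ by even reflection in $x$ to a solution of the heat equation on the symmetric cylinder $\{\,|x|<s_1(t)\,\}$, whose parabolic boundary is just $(0,0)$ and $x=\pm s_1(t)$, on all of which $\phi\ge0$; the minimum principle then yields $\phi\ge0$ throughout, i.e. $U^{s_2}\ge U^{s_1}$ on $\overline{Q_{s_1,T}}$. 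Inserting (i) and (ii) into the displayed identity finishes the proof.

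The step I expect to be the real obstacle is the behaviour at the corner $(0,0)$, where both cylinders collapse to a point and the only regularity at hand is the one in the definition of a solution ($U^{s}\in C(\overline{Q_{s,T}})$, with $U^{s}_x$ continuous away from $t=0$): the maximum principle must be run on the closed domain, and in (i) one must verify the interior-paraboloid condition required by Hopf's lemma at $(0,t_0)$ — automatic if $s_2$ is smooth and positive for $t>0$ — and that $v$ is not constant in a backward neighbourhood of $(0,t_0)$, for otherwise it would be constant down to the origin, where it vanishes. Checking that the even extension in (ii) is a genuine classical solution across $x=0$ is routine once $\phi$ is known to be smooth up to the smooth lateral portion $\{x=0,\ t>0\}$ by standard parabolic regularity.
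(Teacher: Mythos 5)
Your proof is correct and shares the paper's skeleton: both arguments subtract the two instances of \eqref{ModRTransformNeum}, arrive at the decomposition \eqref{nnn}, and reduce the claim to (i) nonnegativity of the fixed boundary Neumann solution and (ii) the comparison $U^{s_2}\ge U^{s_1}$ on $Q_{s_1,T}$. The difference lies in how (ii) is obtained. The paper perturbs the flux, comparing the solution with datum $h+\epsilon$ on one boundary to the solution with datum $h$ on the other, so that the difference has a strictly signed $x$-derivative at $x=0$ and a boundary extremum there is excluded by the same elementary first-derivative argument as in (i); it then lets $\epsilon\to0$, explicitly \emph{assuming} continuity of the solution in $\epsilon$. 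You instead exploit the fact that the difference $\phi=U^{s_2}-U^{s_1}$ carries homogeneous Neumann data at $x=0$, extend it evenly across $x=0$, and apply the minimum principle on the reflected cylinder (equivalently, one could invoke Hopf's boundary point lemma directly at a putative negative minimum on $x=0$). Your route buys a self-contained argument that does not rest on the unproven continuity-in-$\epsilon$ assumption, at the price of the regularity bookkeeping you flag (smoothness of $\phi$ up to $\{x=0,\ t>0\}$ so that the even extension is a classical solution, and care at the degenerate corner $(0,0)$). Two further remarks. First, your version straightens out the bookkeeping in the paper's write-up, where the difference is written as $U^{s_1,\epsilon}-U^{s_2}$ and claimed nonnegative on $s_1$ (the indices are evidently meant to be interchanged, and the final line contains a stray minus sign); the inequality actually needed for \eqref{nnn} is $U^{s_2}\ge U^{s_1}$ on $Q_{s_1,T}$, which is exactly what you prove. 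Second, in step (i) the paper's standing assumption is the strict inequality $h>0$, under which the full strength of Hopf's lemma is not needed: the known strict sign of $U^s_x(0,t)$ already contradicts a boundary minimum there; you only genuinely need Hopf if you insist on allowing $h\ge0$.
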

\begin{proof}
	We assumed the existence of $U^s$. If $h>0$ then we will first show that $U^s$ is everywhere nonnegative. Since $U^s$ is zero on the boundary $s$ and it attains its extremes on the parabolic boundary, the minimum must be at some $(x,t)=(0,t^*)$.  If the minimum is negative then $U^s(0,t^*)<0$. But since $U_x^s(0,t^*)=-h(t^*)<0$ the minimum can not occur on the parabolic boundary and thus $U^s$ is nonnegative on $Q_{s,T}$. 
	
	Consider now  the solutions of the above problem in the boundaries  $s_1,s_2$$U^{s_1,\epsilon}$ and $U^{s_2}$  respectively with Neumann conditions  $-U_x^{s_1,\epsilon}(0,t)=h(t)+\epsilon$ ,$-U_x^{s_2}(0,t)=h(t)$.  Also consider their difference $\Delta^\epsilon=U^{s_1,\epsilon}-U^{s_2}$.  On $s^1$, $\Delta^\epsilon\ge0$ and if its minimum is negative it occurs on a zero value of $x$. But this is impossible since $\Delta^\epsilon_x(0,t)=-\epsilon<0$ and thus $\Delta^\epsilon\ge0$.  As $\epsilon\rightarrow 0$   and assuming continuity with $\epsilon$ we obtain $\Delta^0=U^{s_1,0}-U^{s_2}\ge 0$ everywhere. Then using \eqref{ModRTransformNeum} we have
	\begin{align}\label{nnn}
	\mathcal{R}(s_1)(t)-\mathcal{R}(s_2)(t)=&\int_{0}^{s_2(t)}U^{s_2}(x,t)dx-\int_{0}^{s_1(t)}U^{s_1}(x,t)dx= \nonumber \\
	&\int_{0}^{s_1(t)}[U^{s_2}(x,t)-U^{s_1}(x,t)]dx+\int_{s_1(t)}^{s_2(t)}U^{s_2}(x,t)dx.
	\end{align}
	Since both terms are nonnegative we have $\mathcal{R}(s_1)\ge -\mathcal{R}(s_2)$.
\end{proof} 
\begin{center}
	\begin{figure}[ht]
		\includegraphics[width=10cm, height=8cm]{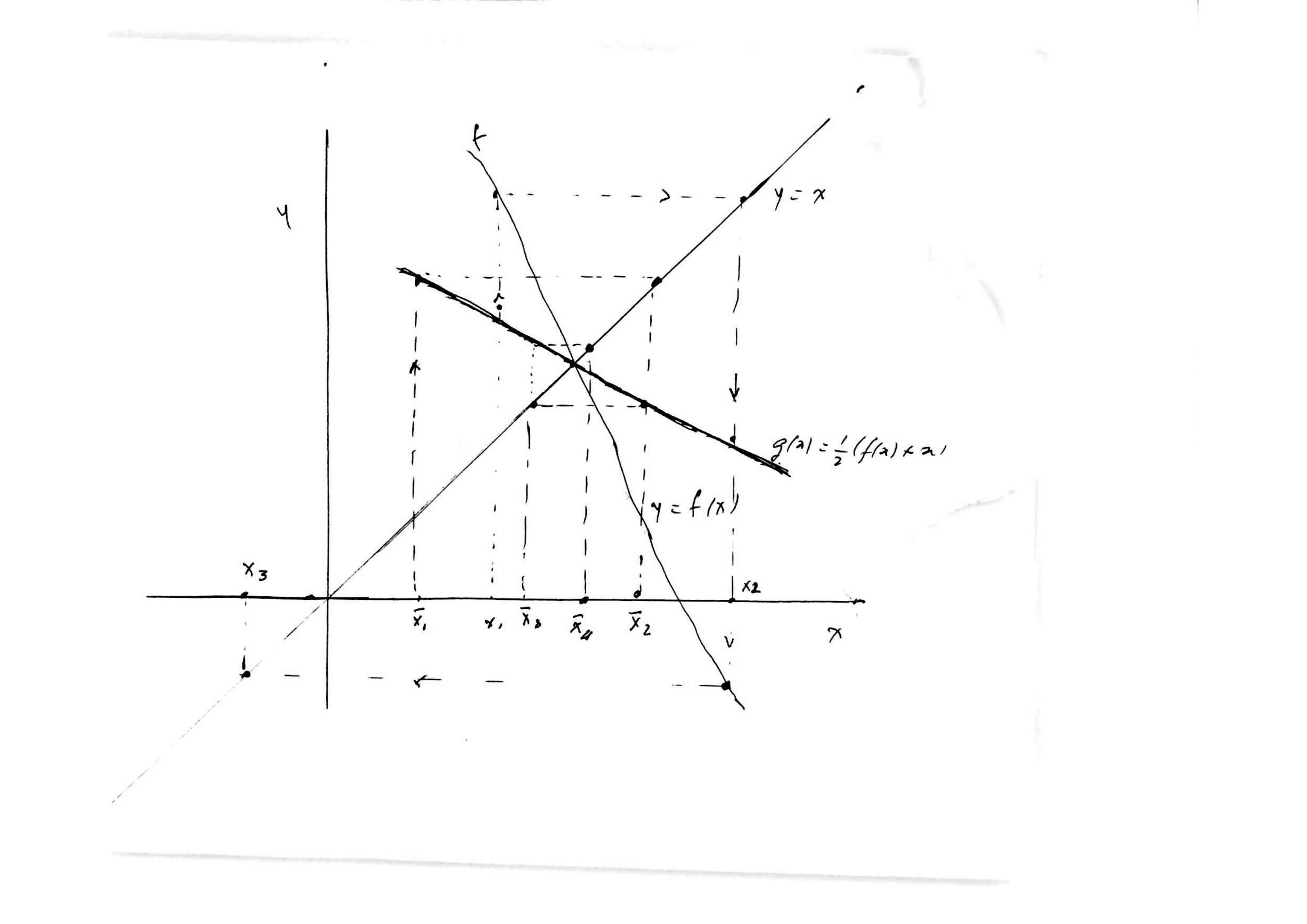}
		\caption{Illustrating the modified boundary transform}
		\label{fig:1}
	\end{figure}
\end{center}
This \textit{reversal lemma} \ref{reversal} is suggestive  of a useful property in one dimensional fixed point algorithms - see Diagram \ref{fig:1} -  where we are interested in computing the fixed point of a scalar function $f$ with a large, negative slope.  The iteration  $x_{n+1}=f(x_n)$ is diverging, but the function  $g(x)=(f(x)+x)/2$ is of smaller absolute value of slope and thus has a better chance of converging to the fixed point through the iteration $\overline{x}_{n+1}=g(\overline{x}_n)$.

We thus introduce the operators $\mathcal{P}^\alpha (s)$ with $\alpha \in [0,1]$ as 
\begin{align}\label{ModOperator}
\mathcal{P}^\alpha (s)(t)=\alpha\mathcal{R}(s)(t)+(1-\alpha)s(t)
\end{align}
\noindent In most of our implementations $\alpha$ will be taken as $1/2$ but different values might be necessary for convergence. Note that $s^*$ is a fixed point of both $\mathcal{P}$ and $\mathcal{R}$. An indicative result for $\alpha=1/2$ is as follows:
\begin{proposition}
	Let $s^*$ be the solution of the Stephan problem with the Neumann condition and let $s\le s^*$ for all $t$. Then for all $t$
	\begin{align}
	s-s^*\le \frac{s-s^*}{2}\le \mathcal{P}^{1/2}(s)-s^*\le \frac{\mathcal{R}(s)-s^*}{2}\le\mathcal{R}(s)-s^*.\end{align}
If $s\ge s^*$  for all $t$
\begin{align}
s-s^*\ge \frac{s-s^*}{2}\le s^*-\mathcal{P}^{1/2}(s)\ge \frac{\mathcal{R}(s)-s^*}{2}\ge\mathcal{R}(s)-s^*.
\end{align}
\end{proposition}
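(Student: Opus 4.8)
The plan is to reduce the whole chain to a pointwise-in-$t$ statement about the arithmetic mean of two real numbers whose signs are pinned down by the Reversal Lemma. The key observation is that $s^*$, being a solution of the Stefan problem with the Neumann condition, is a fixed point of $\mathcal{R}$, so $\mathcal{R}(s^*)=s^*$; moreover $s^*$ is smooth and starts at the origin, so Lemma~\ref{reversal} may be applied to the pairs $(s,s^*)$ or $(s^*,s)$. Fixing $t$ and writing $a=s(t)-s^*(t)$ and $b=\mathcal{R}(s)(t)-s^*(t)$, one has $\mathcal{P}^{1/2}(s)(t)-s^*(t)=\tfrac12(a+b)$, so every quantity appearing in the two displays is one of $a$, $a/2$, $(a+b)/2$, $b/2$, $b$ (up to an overall sign).

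First I would handle the case $s\le s^*$, i.e.\ $a\le 0$. Here Lemma~\ref{reversal} with $s_1=s\le s_2=s^*$ gives $\mathcal{R}(s)\ge\mathcal{R}(s^*)=s^*$, so $b\ge 0$. The chain $a\le\tfrac a2\le\tfrac{a+b}{2}\le\tfrac b2\le b$ then splits into four elementary facts: $a\le 0\Rightarrow a\le a/2$; the mean of a nonpositive and a nonnegative number is $\ge a/2$ (one adds the nonnegative $b/2$) and is $\le b/2$ (one adds the nonpositive $a/2$); and $b\ge 0\Rightarrow b/2\le b$. Since $t$ was arbitrary, this is precisely the first displayed inequality.

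The case $s\ge s^*$ is the mirror image. Now $a\ge 0$, and Lemma~\ref{reversal} with $s_1=s^*\le s_2=s$ gives $\mathcal{R}(s)\le\mathcal{R}(s^*)=s^*$, so $b\le 0$. Reversing all four signs above yields $b\le\tfrac b2\le\tfrac{a+b}{2}\le\tfrac a2\le a$, which, rewritten in terms of $s^*-\mathcal{P}^{1/2}(s)$ and $s^*-\mathcal{R}(s)$ so that the displayed quantities are nonnegative, is the second displayed inequality.

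I do not expect a genuine obstacle here: the mathematical content is entirely carried by the Reversal Lemma, and what remains is the elementary ``averaging damps the overshoot'' computation depicted in Diagram~\ref{fig:1}. The single point that needs care is that Lemma~\ref{reversal} be invoked legitimately, which requires the Fixed Boundary Neumann problem to be solvable for the candidate $s$ and for $s^*$ (automatic for $s^*$, since it solves the full Stefan problem) and uses $h>0$ to guarantee $U^s\ge 0$, exactly as in that lemma's proof.
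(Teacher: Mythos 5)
Your proof is correct and follows essentially the same route as the paper's: both reduce to the identity $\mathcal{P}^{1/2}(s)-s^*=\tfrac12(s-s^*)+\tfrac12(\mathcal{R}(s)-s^*)$ together with the sign information $s-s^*\le 0$ and $\mathcal{R}(s)-s^*=\mathcal{R}(s)-\mathcal{R}(s^*)\ge 0$ supplied by the Reversal Lemma and the fixed-point property of $s^*$. Your pointwise $a,b$ formulation is just a slightly more explicit bookkeeping of the same elementary averaging argument (and, usefully, it also spells out the outer inequalities and the mirrored case that the paper dismisses with ``a similar proof'').
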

\begin{proof}
	If $s\le s^*$
\begin{align}
\mathcal{P}^{1/2}(s)-s^*= \frac{1}{2}(\mathcal{R}(s)+s)-\frac{1}{2}(\mathcal{R}(s^*)+s^*)=\frac{1}{2}(\mathcal{R}(s)-s^*)+\frac{1}{2}(s-s^*)\le  \frac{1}{2}(\mathcal{R}(s)-s^*) 
\end{align}	
and 
\begin{align}
\mathcal{P}^{1/2}(s)-s^*= \frac{1}{2}(\mathcal{R}(s)+s)-s^*=\frac{1}{2}(\mathcal{R}(s^*)-s^*)+\frac{1}{2}(s-s^*)\ge  \frac{1}{2}(s-s^*) 
\end{align}
A similar proof in case $s>s^*$.
\end{proof}
\noindent  A similar result is valid if $s$ dominates the equilibrium $s^*$. The proof is in a sense a special case of the next proposition modulo a boundedness assumption on the operator $\mathcal{R}$. The proposition shows that our modified operator $\mathcal{P}$ has better  contraction properties than $\mathcal{R}$. We generalize the proposition in our next result.  We use the norm $\|s_1-s_2\|\equiv \min_t|s_1(t)-s_2(t)|$ for $t\le T$.
\begin{proposition}\label{prop:GammaBound}
	Assume $s_1\le s_2$ and that $\mathcal{R}$ satisfies the reversal property of Lemma \ref{reversal}, while $\mathcal{P}^\alpha (s)(t)=\alpha\mathcal{R}(s)(t)+(1-\alpha)s(t)$ as in \eqref{ModOperator}.  Assuming that $\mathcal{R}$ is bounded, i.e. $\|\mathcal{R}(s_1)-\mathcal{R}(s_2)\|\le \gamma\|s_1-s_2\|$ and $\gamma$ is finite, then 
	\begin{equation}
\|\mathcal{P}^{\frac{1}{\gamma+1}}(s_1)-\mathcal{P}^{\frac{1}{\gamma+1}}(s_2)\|\le \frac{\gamma}{\gamma+1}\|s_1-s_2\|\le\|s_1-s_2\|
	\end{equation}
	In terms of the operators $\mathcal{P}^\alpha$ we have $\alpha= \frac{1}{\gamma+1}$.
\end{proposition}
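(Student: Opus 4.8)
The plan is to exploit a sign cancellation: under the hypotheses the two terms making up $\mathcal{P}^\alpha(s_1)-\mathcal{P}^\alpha(s_2)$ point in opposite directions, so that passing to a convex combination shrinks the discrepancy instead of letting the two errors add up the way the triangle inequality would.

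Concretely, I would set $d(t):=s_2(t)-s_1(t)$ and $e(t):=\mathcal{R}(s_1)(t)-\mathcal{R}(s_2)(t)$. By hypothesis $s_1\le s_2$, so $d\ge 0$, and by the reversal property of Lemma~\ref{reversal}, $e\ge 0$. Putting $\alpha=\frac{1}{\gamma+1}$ (hence $1-\alpha=\frac{\gamma}{\gamma+1}$) and expanding \eqref{ModOperator},
\[
\mathcal{P}^{\alpha}(s_1)(t)-\mathcal{P}^{\alpha}(s_2)(t)=\alpha\bigl(\mathcal{R}(s_1)(t)-\mathcal{R}(s_2)(t)\bigr)+(1-\alpha)\bigl(s_1(t)-s_2(t)\bigr)=\alpha e(t)-(1-\alpha)d(t),
\]
a difference of two nonnegative numbers. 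The elementary inequality $|a-b|\le\max(a,b)$ for $a,b\ge 0$ then gives, for every $t\in[0,T]$,
\[
\bigl|\mathcal{P}^{\alpha}(s_1)(t)-\mathcal{P}^{\alpha}(s_2)(t)\bigr|\le\max\bigl(\alpha\,e(t),\,(1-\alpha)\,d(t)\bigr).
\]
Now I would bound $e(t)$ and $d(t)$ using the boundedness hypothesis $\|\mathcal{R}(s_1)-\mathcal{R}(s_2)\|\le\gamma\|s_1-s_2\|$: the first entry of the maximum is at most $\alpha\gamma\|s_1-s_2\|$ and the second is at most $(1-\alpha)\|s_1-s_2\|$. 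The choice $\alpha=\frac{1}{\gamma+1}$ is exactly the point at which $\alpha\gamma=1-\alpha$, so both bounds equal $\frac{\gamma}{\gamma+1}\|s_1-s_2\|$; taking the norm on the left then yields $\|\mathcal{P}^{1/(\gamma+1)}(s_1)-\mathcal{P}^{1/(\gamma+1)}(s_2)\|\le\frac{\gamma}{\gamma+1}\|s_1-s_2\|$, which is $\le\|s_1-s_2\|$ since $\gamma\ge 0$.

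I do not anticipate a real difficulty; once the sign information is recorded the proposition is a one-line estimate. The one point worth stressing — and the actual source of the result — is the use of Lemma~\ref{reversal}: it forces $e(t)\ge 0$ while $s_1(t)-s_2(t)\le 0$, so $\alpha e(t)-(1-\alpha)d(t)$ is a genuine cancellation. Without it one could only reach $(\alpha\gamma+1-\alpha)\|s_1-s_2\|$, which exceeds $\|s_1-s_2\|$ precisely when $\gamma>1$, i.e. in the diverging regime that motivated introducing $\mathcal{P}^\alpha$ in the first place. It also seems worth noting, perhaps as a short remark after the proof, that $\alpha=\frac{1}{\gamma+1}$ is optimal: it minimizes $\max(\alpha\gamma,\,1-\alpha)$ over $\alpha\in[0,1]$ by balancing the two competing terms, which is why this particular exponent shows up in the statement.
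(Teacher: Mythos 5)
Your proof is correct and follows essentially the same route as the paper: the sign cancellation you isolate via $|a-b|\le\max(a,b)$ for $a,b\ge0$ is exactly the paper's sandwich $(1-\alpha)(s_1-s_2)\le\mathcal{P}^\alpha(s_1)-\mathcal{P}^\alpha(s_2)\le\alpha(\mathcal{R}(s_1)-\mathcal{R}(s_2))$ followed by a case split on the sign, and the balancing $\alpha\gamma=1-\alpha$ is the paper's $\min_\alpha\max(\alpha\gamma,1-\alpha)$ step. Your write-up is in fact cleaner, and correctly treats $\|\cdot\|$ as the sup norm, which is what the paper's proof actually uses despite the $\min_t$ in its definition.
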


\begin{proof}
Let $s_1\le s_2$ all $t$.  Then for all $t$ 
\begin{equation}
(1-\alpha)(s_1-s_2)\le\mathcal{P}^\alpha(s_1)-\mathcal{P}^\alpha(s_2)=\alpha(\mathcal{R}(s_1)-\mathcal{R}(s_2))+(1-\alpha)(s_1-s_2)\le \alpha(\mathcal{R}(s_1)-\mathcal{R}(s_2)) 
\end{equation}
since $s_1\le s_2,\;\mathcal{R}(s_1)\ge\mathcal{R}(s_2)$.  

Now if $\mathcal{P}^\alpha(s_1)-\mathcal{P}^\alpha(s_2)\ge0$ at some $t$ then $$|\mathcal{P}^\alpha(s_1)(t)-\mathcal{P}^\alpha(s_2)(t)|\le \alpha|\mathcal{R}(s_1)(t)-\mathcal{R}(s_2)(t)|\le \alpha\max_t|\mathcal{R}(s_1)(t)-\mathcal{R}(s_2)(t)|= \alpha\|\mathcal{R}(s_1)-\mathcal{R}(s_2)\|\le\alpha\gamma\|\mathcal{R}(s_1)-\mathcal{R}(s_2)\|$$ and otherwise if $\mathcal{P}^\alpha(s_1)-\mathcal{P}^\alpha(s_2)\ge0$ then  $$|\mathcal{P}^\alpha(s_1)(t)-\mathcal{P}^\alpha(s_2)(t)|\le (1-\alpha) |s_1(t)-s_2(t)|\le (1-\alpha)\max_t |s_1(t)-s_2(t)|=(1-\alpha) \|s_1-s_2\| $$
Hence $$|\mathcal{P}(s_1)(t)-\mathcal{P}(s_2)(t)|\le \max(\alpha\gamma\|s_1-s_2\|,(1-\alpha)\|s_1-s_2\|)$$
Since the above relation is true for all $t$ we have
\begin{equation*}
\max_t|\mathcal{P}(s_1)(t)-\mathcal{P}(s_2)(t)|\equiv\|\mathcal{P}(s_1)-\mathcal{P}(s_2)\|\le  \max(\alpha\gamma,1-\alpha)\|s_1-s_2\|
\end{equation*}
and finally
\begin{equation}
\|\mathcal{P}(s_1)-\mathcal{P}(s_2)\|\le \min_\alpha \max(\alpha\gamma,1-\alpha)\|s_1-s_2\|=\frac{\gamma}{\gamma+1}\|s_1-s_2\|
\end{equation}
The min-max relation in the last equality occurs when the two terms $\alpha\gamma,1-\alpha$ are equal and hence $\alpha=\frac{1}{1+\gamma}$. The above inequality is symmetric in $s_1,s_2$ so it is valid whenever one boundary dominates the other.
\end{proof}
In the existence proof for the Neumann version in \cite{Andreu} it is shown that the $\mathcal{R}$ operator is indeed bounded so the $\mathcal{P}$ operators can be used as contraction operators to locate the fixed point. We next show that even in the Dirichlet problem the $\mathcal{R}$ operator has the reversal property, so if one can show that it is bounded this provides a justification of the use of $\mathcal{P}$ to locate the fixed point.
\begin{lemma}
	Consider the fixed boundary Dirichlet problem with $U^s(0,t)=g(t)$, $g$ nonnegative and nondecreasing.  Then for a given boundary $s$ the solution $U^s$ satisfies $U^s_x(0,t)\le 0$.
\end{lemma}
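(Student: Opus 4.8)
The plan is to show that along the heated face $x=0$ the temperature is maximal, i.e. $U^s(x,t)\le U^s(0,t)=g(t)$ throughout $Q_{s,T}$, and then to read off the sign of $U^s_x(0,t)$ from a one-sided difference quotient. First I would recall, exactly as in the proof of the reversal Lemma~\ref{reversal}, that $U^s\ge 0$ on $\overline{Q_{s,T}}$: $U^s$ attains its extremes on the parabolic boundary, which consists of the segment $\{x=0\}$ where $U^s=g\ge 0$, the curve $\{x=s(t)\}$ where $U^s=0$, and the corner $(0,0)$ where $U^s=0$, so its minimum is nonnegative. Next I would compare $U^s$ with the $x$-independent function $W(x,t)=g(t)$. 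Since $g$ is nondecreasing, $W_t-W_{xx}=g'(t)\ge 0$, so $W$ is a supersolution of the heat equation on $Q_{s,T}$; on the parabolic boundary $W=g=U^s$ along $\{x=0\}$, $W=g\ge 0=U^s$ along $\{x=s(t)\}$, and $W\ge 0=U^s$ at $(0,0)$, so $W\ge U^s$ on the whole parabolic boundary. The weak maximum (comparison) principle for the heat operator on $Q_{s,T}$ — whose lateral boundary is $C^1$ and which degenerates to a point only at $t=0$, a situation handled as in Lemma~\ref{reversal} — then yields $U^s(x,t)\le g(t)$ on $\overline{Q_{s,T}}$.

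To finish, fix $t\in(0,T]$. For every $x\in(0,s(t)]$ we then have $U^s(x,t)-U^s(0,t)=U^s(x,t)-g(t)\le 0$, hence the difference quotient $\big(U^s(x,t)-U^s(0,t)\big)/x\le 0$. Letting $x\to 0^+$ and using that $U^s_x$ exists and is continuous on $\overline{Q_{s,T}}\setminus\{t=0\}$ (part of the definition of a solution), we obtain $U^s_x(0,t)\le 0$, as claimed.

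The main obstacle is purely technical rather than conceptual: justifying the comparison principle on the moving, time-degenerate domain $Q_{s,T}$, and, if $g$ is only assumed nondecreasing rather than $C^1$, making rigorous sense of the statement that $W(x,t)=g(t)$ is a supersolution. Both points can be handled by approximating $g$ from below by smooth nondecreasing $g_\varepsilon\uparrow g$, proving $U^{s,\varepsilon}\le g_\varepsilon$ for each $\varepsilon$, and passing to the limit via continuity of the solution in the data — the same device used for the parameter $\epsilon$ in Lemma~\ref{reversal}. I would stress that the monotonicity of $g$ is not a cosmetic assumption: it is exactly what makes $g(t)$ a supersolution, and if $g$ decreased on some interval the temperature could overshoot $g$ in the interior and the sign conclusion would be false.
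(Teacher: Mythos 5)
Your proof is correct, and it reaches the paper's conclusion by a slightly different route. The paper applies the weak maximum principle directly on $Q_{s,T}$: since $U^s=0$ on the curve $x=s(t)$ and $g$ is nondecreasing, the maximum over the parabolic boundary is $g(T)$, attained at $(0,T)$; if $U^s_x(0,T)>0$ there would be a larger value at some $(x,T)$ with $x>0$, which lies off the parabolic boundary — contradiction. Repeating for every $T$ gives the claim (and, implicitly, the same bound $U^s(x,t)\le g(t)$ that you prove). You instead compare $U^s$ with the supersolution $W(x,t)=g(t)$ and then take a one-sided difference quotient at $x=0$. The two arguments are close cousins — both are maximum-principle arguments that end with the same Hopf-type observation at the face $x=0$ — but yours pays a small extra price: making $W$ a supersolution needs $g'\ge 0$, so for merely nondecreasing $g$ you must smooth and pass to the limit, whereas the paper's version uses monotonicity only through the elementary fact that $\max_{[0,T]}g=g(T)$ and needs no regularity of $g$ beyond what the fixed-boundary problem already requires. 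On the other hand, your comparison delivers the clean pointwise estimate $U^s(x,t)\le g(t)$ on all of $\overline{Q_{s,T}}$ in a single application rather than via a family of truncated domains, and your closing remark that monotonicity of $g$ is essential (the paper itself notes the lemma fails for $g$ with decreasing stretches) is exactly right. The opening paragraph establishing $U^s\ge 0$ is not actually used in your argument and could be dropped.
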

\begin{proof}
	Consider the problem for $Q_{s,T}$ i.e. $t\in[0,T]$.The maximum of $U^s$ is on the parabolic boundary and since $U^s(s(t),t)=0$ the maximum is at $x=0,\;t=T$ where $U^s(0,T)=g(T)$ given that  $g$ is nondecreasing.  Hence $U^s_x(0,T)\le0$ for otherwise the maximum would not be on the parabolic boundary. Since $g$ is nondecreasing the argument can be repeated for any value of $T$.
\end{proof}
We will not use the above lemma, but it is interesting in its own right.  It is not valid for $g$'s that have both increasing and decreasing regions.
\begin{lemma}\label{lem:DerivDirichlet}
	For the Fixed Boundary Dirichlet problems with boundaries $s_1\le s_2$ (i.e. for which  $U^{s_1}(0,t)=U^{s_2}(0,t)=g(t)$) we have $U^{s_1}_x(0,t)\le U^{s_2}_x(0,t)$.
\end{lemma}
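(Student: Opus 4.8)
The plan is to compare the two solutions on the smaller region $Q_{s_1,T}$ and to read off the sign of the derivative at the origin from a maximum principle argument of exactly the flavor used in the preceding lemmas. Concretely, set $w=U^{s_2}-U^{s_1}$ on $\overline{Q_{s_1,T}}$; this makes sense because $s_1\le s_2$ forces $Q_{s_1,T}\subseteq Q_{s_2,T}$, so $U^{s_2}$ is defined there and, like $U^{s_1}$, solves the heat equation on $Q_{s_1,T}$. Since the desired inequality $U^{s_1}_x(0,t)\le U^{s_2}_x(0,t)$ is literally $w_x(0,t)\ge 0$, it suffices to establish $w\ge 0$ on $Q_{s_1,T}$ together with $w(0,t)=0$: for $t>0$ the (right) derivative $w_x(0,t)=\lim_{x\to 0^+}w(x,t)/x$ exists by the regularity hypothesis $U^s_x\in C(\overline{Q_{s,T}}-\{t=0\})$, and it is nonnegative because $w(0,t)=0\le w(x,t)$ for $x\in[0,s_1(t)]$.

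To obtain $w\ge 0$ I would first check that $U^{s_2}\ge 0$ on $Q_{s_2,T}$: by the maximum principle $U^{s_2}$ attains its minimum on the parabolic boundary, which consists of $\{x=0\}$, where $U^{s_2}=g\ge 0$, and $\{x=s_2(t)\}$, where $U^{s_2}=0$ (the base degenerates to the single point $(0,0)$ since $s_2(0)=0$). Hence $U^{s_2}\ge 0$ everywhere on $Q_{s_2,T}$, in particular at the points $(s_1(t),t)$. Now inspect $w$ on the parabolic boundary of $Q_{s_1,T}$: at $x=0$ we have $w=g-g=0$; on $x=s_1(t)$ we have $w=U^{s_2}(s_1(t),t)-U^{s_1}(s_1(t),t)=U^{s_2}(s_1(t),t)\ge 0$; and $w\to 0$ at the corner $(0,0)$. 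The minimum principle then gives $w\ge 0$ on $\overline{Q_{s_1,T}}$, completing the argument.

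The only delicate point is the behavior near $t=0$, where both domains collapse and one must know the boundary data are attained continuously so that the classical maximum principle applies; this is handled exactly as in the earlier lemmas of this section, invoking $U^s\in C(\overline{Q_{s,T}})$ and $s(0)=0$. I do not expect to need a Hopf-type boundary lemma, since only the weak inequality $U^{s_1}_x(0,t)\le U^{s_2}_x(0,t)$ is claimed; the elementary difference-quotient observation above is enough. Finally, it is worth noting that the intermediate fact $w\ge 0$, i.e.\ $U^{s_1}\le U^{s_2}$ on $Q_{s_1,T}$, is precisely the additional ingredient that, together with this lemma and the representation \eqref{ModRTransform}, allows one to extend the reversal property of Lemma \ref{reversal} to the Dirichlet transform.
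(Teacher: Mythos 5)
Your proof is correct and follows essentially the same route as the paper: both compare $w=U^{s_2}-U^{s_1}$ on the smaller domain $Q_{s_1,T}$, use the minimum principle to get $w\ge 0$ with $w(0,t)=0$, and read off $w_x(0,t)\ge 0$ at the origin. You merely make explicit two points the paper leaves implicit --- that $U^{s_2}\ge 0$ on $s_1$ (needed for the boundary data of $w$) and the difference-quotient justification of the derivative sign --- which is a welcome tightening but not a different argument.
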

\begin{proof}
	Consider $\Delta U(t)=U^{s_2}(x,t)-U^{s_1}(x,t)$ up to $s_1$, which vanishes for $x=0$ and is nonnegative on $s_1$. The minimum of $\Delta U$ is on the parabolic boundary and in particular for some zero $x$ and some $t$.  But then  $\Delta U_x(0,t)$ must be nonnegative for otherwise there is a minimum in the parabolic interior.  Therefore   $\Delta U_x(0,t)=U^{s_2}_x(0,t)-U^{s_2}_x(0,t)\ge 0$ as required. 
\end{proof}
The main result for Dirichlet problems is the following:
\begin{proposition}
	The operator $\mathcal{R}$ as defined in \eqref{RTransform} for the Fixed Boundary Dirichlet problem with nonnegative boundary condition $U(0,t)=g(t)\ge0$ has the reversal property of Lemma \ref{reversal}, i.e. if $s_1,s_2$ are boundary functions and $s_1(t)\le s_2(t),\;t \in [0,T]$ then  $\mathcal{R}(s_1)(t)\ge \mathcal{R}(s_2)(t)$. 
\end{proposition}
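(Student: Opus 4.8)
The plan is to mimic the proof of the Neumann Reversal Lemma \ref{reversal}, but working from the divergence-theorem form \eqref{ModRTransform} of $\mathcal{R}$ and using the boundary-flux comparison of Lemma \ref{lem:DerivDirichlet}. Since $s_1\le s_2$ on $[0,T]$ we have $Q_{s_1,T}\subseteq Q_{s_2,T}$, so $U^{s_2}$ is defined on $\overline{Q_{s_1,T}}$ and the difference $\Delta U:=U^{s_2}-U^{s_1}$ is meaningful there. Writing \eqref{ModRTransform} for $s_1$ and for $s_2$ and subtracting,
\begin{align*}
\mathcal{R}(s_1)(t)-\mathcal{R}(s_2)(t)=\int_{0}^{t}\big[U^{s_2}_x(0,z)-U^{s_1}_x(0,z)\big]\,dz+\Big[\int_{0}^{s_2(t)}U^{s_2}(x,t)\,dx-\int_{0}^{s_1(t)}U^{s_1}(x,t)\,dx\Big],
\end{align*}
so it is enough to prove that each of the two bracketed expressions is nonnegative for every $t\in[0,T]$.

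The first bracket is nonnegative at once: Lemma \ref{lem:DerivDirichlet} gives $U^{s_1}_x(0,z)\le U^{s_2}_x(0,z)$ for all $z$. For the second bracket I would split the integral over $[0,s_2(t)]$ at the point $x=s_1(t)$, exactly as in \eqref{nnn}, obtaining
\begin{align*}
\int_{0}^{s_2(t)}U^{s_2}(x,t)\,dx-\int_{0}^{s_1(t)}U^{s_1}(x,t)\,dx=\int_{0}^{s_1(t)}\big[U^{s_2}(x,t)-U^{s_1}(x,t)\big]\,dx+\int_{s_1(t)}^{s_2(t)}U^{s_2}(x,t)\,dx.
\end{align*}
The last summand is nonnegative because $U^{s_2}\ge 0$ throughout $Q_{s_2,T}$: by the minimum principle $U^{s_2}$ attains its minimum on the parabolic boundary, where it equals the nonnegative datum $g$ on $\{x=0\}$, equals $0$ on the free boundary $\{x=s_2(t)\}$, and equals $0$ at $t=0$; this is the one spot where the hypothesis $g\ge0$ enters. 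The remaining summand is nonnegative once $\Delta U\ge0$ on $\overline{Q_{s_1,T}}$, and this is the same comparison already used inside the proof of Lemma \ref{lem:DerivDirichlet}: $\Delta U$ vanishes on $\{x=0\}$ (both solutions carry the same Dirichlet datum $g$) and at $t=0$, while on the lateral boundary $\{x=s_1(t)\}$ one has $U^{s_1}=0$ and $U^{s_2}\ge0$, so $\Delta U\ge0$ on the whole parabolic boundary of $Q_{s_1,T}$ and hence, by the minimum principle, on all of $Q_{s_1,T}$.

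Adding the three nonnegative contributions yields $\mathcal{R}(s_1)(t)\ge\mathcal{R}(s_2)(t)$ for every $t\in[0,T]$, which is the assertion. The delicate step --- the analogue of the $\epsilon$-perturbation employed in Lemma \ref{reversal} --- is the rigorous justification of the comparison $\Delta U\ge0$ and of $U^{s_2}\ge0$; here, because the two Dirichlet data at $x=0$ coincide, no perturbation is required and $\Delta U$ vanishes identically on $\{x=0\}$, so the minimum principle applies directly as soon as $U^{s_1}$ and $U^{s_2}$ have the regularity postulated for solutions of the fixed boundary Dirichlet problem. I expect this maximum-principle bookkeeping, rather than any computation, to be the main content of the argument.
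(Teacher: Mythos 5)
Your proof is correct and follows essentially the same route as the paper's: subtract the two instances of \eqref{ModRTransform}, handle the flux term at $x=0$ via Lemma \ref{lem:DerivDirichlet}, split the spatial integral at $x=s_1(t)$ as in \eqref{nnn}, and conclude from $\Delta U\ge0$ and $U^{s_2}\ge0$ by the minimum principle. If anything, your sign bookkeeping is cleaner than the paper's (whose displayed computation of $\mathcal{R}(s_2)-\mathcal{R}(s_1)$ contains sign slips), and you make explicit the use of $g\ge0$ to get $U^{s_2}\ge0$, which the paper leaves implicit.
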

\begin{proof}
 Consider the boundary functions (smooth and starting at the origin) $s_1,s_2$ and such that $s_1(t)\le s_2(t),\;t \in [0,T]$ and the corresponding solutions $U^{s_1}(x,t),U^{s_2}(x,t)$ on for $x\le s_1(t)$.  On $s_1$ we have $U^{s_2}(s_1(r),t)-U^{s_1}(s_1(r),t)\ge 0$ and at zero $U^{s_2}(0,t)-U^{s_1}(0,t)=g(t)-g(t)=0$.  Hence by the minimum principle $U^{s_2}(x,t)-U^{s_1}(x,t)\ge 0$ for $x\le s_1(t)$.  We also have by lemma \ref{lem:DerivDirichlet} $U^{s_1}_x(0,t)\le U^{s_2}_x(0,t)$.  Hence using expression \eqref{ModRTransform} for $\mathcal{R}$ we have
 \begin{equation}
 \mathcal{R}(s_2)(t)-\mathcal{R}(s_1)(t)=-\int_{0}^{t}[U^{s_2}_x(0,\tau)-U^{s_1}_x(0,\tau)]d\tau-\int_{0}^{s_2(t)}U^{s_2}(x,t)dx+\int_{0}^{s_1(t)}U^{s_1}_x(x,t)dx=
 \end{equation}
 The first integral is non negative by the derivatives' property at zero.  The last two integrals are 
  \begin{equation}
-\int_{0}^{s_2(t)}U^{s_2}(x,t)dx+\int_{0}^{s_1(t)}U^{s_1}(x,t)dx=\int_{0}^{s_1(t)}[U^{s_1}(x,t)-U^{s_2}(x,t)]dx-\int_{s_1(t)}^{s_2(t)}U^{s_2}(x,t)dx\ge 0
 \end{equation}
Hence $\mathcal{R}$ has the desired property
\end{proof}
To some extent a similar treatment is possible for the nonhomegeneous problem where the Stefan condition is $-U_x(s(t),t))=\beta(s(t))\dot{s}(t)$.  Then we modify the operator $\mathcal{R}$ as follows (to be completed...)

\subsection{Iterative algorithm based on the Boundary Immobilization Technique}
Set,
\begin{itemize}
	\item[] T=final\:time
	\item[] dx=spatial\:step
	\item[] dt=time\:step
	\item[] h\_function=h(t)
	\item[] t\_vec=0:td:T
	\item[] h\_vec=h\_function(t\_vec)
	\item[] s\_vec 
	\item[] error=1
	\item[] counter = 0
\end{itemize}
such that $1/dx$, $T/dt$ be integer and $s\_vec$ be the vector of boundary of equal length with $t\_vec$, initially arbitrary.
\begin{itemize}
	\item[] \textbf{while} \: error $\geq$ tol \:\& counter  $\leq$ upCounterBound
	\begin{itemize}
		\item[] F=temperatureDistribution(T,dx,dt,s,h)
		\item[] \textbf{for} i=1:length(s)
		\begin{itemize}
			\item[] s1(i) = -trapz(h\_vec(1:i))dt - trapz(F(:,i))s(i)dx
		\end{itemize}
		\item[] \textbf{end for}
		\item[] error = norm(abs(s1-s),inf)
		\item[] counter = counter+1 
		\item[] s=s1
	\end{itemize}
	\item[] \textbf{end while}
\end{itemize}

\subsection{Temperature distribution for given T,dx,dt,s,h}
We will use the Crank-Nicolson scheme to implement the function temperatureDistribution(T,dx,dt,s,h) that returns the temperature distribution for given T,dx,dt,s,h.

\section{Numerical schemes and results}\label{Numerical}
\subsection{Boundary immobilization technique-BIM}
As the boundary varies in time, in order to keep the number of spatial nodes fixed and any finite difference scheme can be applied,  either a changing spatial step, either a coordinate transformation technique must be used.We here use the Boundary Immobilization technique, that fixes the moving boundary by using a fixed co-ordinate system in space. So, we set

\begin{align}
&\xi=\frac{x}{s(t)}, \: T=F(\xi,t),                  \label{eq:spatial transformation}
\end{align}
so that the equations (\ref{eq:Heat})-(\ref{eq:Neumann boundary}) become

\begin{align}
&\frac{\partial^2{F}}{\partial{\xi^2}}=s^2\frac{\partial{F}}{\partial{t}}-s\xi\frac{ds}{dt}\frac{\partial{F}}{\partial{\xi}} \label{eq:standard parabolic tranformed}\\
&s(0)=0,\\
&F(\xi,0)=0, \\
&\beta\frac{ds}{dt}=-\frac{\partial{F}}{\partial{\xi}}|_{\xi=1}  
\end{align}
subject to 
\begin{align}
&F(0,t)=g(t), \:or                                               \label{eq:Dirichlet boundary transformed} \\       
&\frac{\partial{F}}{\partial{\xi}}|_{\xi=0} = s(t)h(t)=\hat{h}(t)                     \label{eq:Neumann boundary transformed}
\end{align}

Let for symplicity $s^2=z$, that is $s\frac{ds}{dt}=\frac{dz}{2dt}$, and
\begin{align}
&N=\frac{1}{\Delta \xi},\:\: \xi_i = i\Delta\xi \:\: i=0...N \\
&M =\frac{\mathbb{T}}{\Delta t}, \:\: t^{n}=n\Delta t \:\: n=0...M
\end{align}
where $\mathbb{T}$ is the final time.

Then, using the Crank-Nicolson scheme involving a central difference at time $t^{n-\frac{1}{2}}$ and a second order central difference for space derivative for the discretization of \ref{eq:standard parabolic tranformed}, we have

\begin{align}
\frac{1}{2}\left( \frac{F_{i+1}^{n}-2F_{i}^{n}+F_{i-1}^{n}}{\Delta \xi ^{2}} \right)+\frac{1}{2}\left( \frac{F_{i+1}^{n-1}-2F_{i}^{n-1}+F_{i-1}^{n-1}}{\Delta \xi ^{2}} \right) =&z^{n-\frac{1}{2}}\left( \frac{F_{i}^{n}-F_{i}^{n-1}}{\Delta t} \right) \nonumber \\
&-\frac{\xi_i}{2}\left(\frac{dz}{dt}\right)^n \left[\frac{1}{2}\left( \frac{F_{i+1}^{n}-F_{i-1}^{n}}{2\Delta \xi} \right) +\frac{1}{2}\left( \frac{F_{i+1}^{n-1}-F_{i-1}^{n-1}}{2\Delta \xi} \right)\right] \Rightarrow \nonumber \\
(F_{i+1}^{n}-2F_{i}^{n}+F_{i-1}^{n})+(F_{i+1}^{n-1}-F_{i}^{n-1}+F_{i-1}^{n-1})=& z^{n-\frac{1}{2}} \frac{2\Delta \xi^2}{\Delta t}(F_{i}^{n}-F_{i}^{n-1}) \nonumber \\
&-\frac{\xi_i}{4}\left(\frac{dz}{dt} \right)^n\Delta \xi[(F_{i+1}^{n}-F_{i-1}^{n})+(F_{i+1}^{n-1}-F_{i-1}^{n-1})] \nonumber
\end{align}
where $z^{n}=z(d\Delta t)$, $z^{n-\frac{1}{2}}=\frac{z^{n}+z^{n-1}}{2}$ and $\left(\frac{dz}{dt} \right)^n = \frac{z^{n}-z^{n-1}}{\Delta t}$.
\newline
Setting $\rho^{n}=z^{n-\frac{1}{2}} \frac{2\Delta \xi^2}{\Delta t}$ and $\sigma_{i}^{n}=\frac{\xi_i}{4}\left(\frac{dz}{dt} \right)^n\Delta \xi$ the above equation becomes,

\begin{align}
& (F_{i+1}^{n}-2F_{i}^{n}+F_{i-1}^{n})+(F_{i+1}^{n-1}-2F_{i}^{n-1}+F_{i-1}^{n-1})= \rho^{n}(F_{i}^{n}-F_{i}^{n-1}) -\sigma_{i}^{n}[(F_{i+1}^{n}-F_{i-1}^{n})+(F_{i+1}^{n-1}-F_{i-1}^{n-1})] \Rightarrow \nonumber \\
& (1-\sigma_{i}^{n})F_{i-1}^{n}-(2+\rho^{n})F_{i}^{n}+(1+\sigma_{i}^{n})F_{i+1}^{n} = -(1-\sigma_{i}^{n}) F_{i-1}^{n-1}+(2-\rho^{n})F_{i}^{n-1}-(1+\sigma_{i}^{n})F_{i+1}^{n-1}  \label{eq:Equation i=1..N-1}
\end{align}

The above equation is valid  for $i=1...N-1$, while for $i=0$, taking into account that $\sigma_{i}^{n}=0$ and replacing the term $F_{-1}^{n}$ using the relationship $\frac{F_{1}^{n}-F_{-1}^{n}}{2\Delta \xi}=h^{n}s^{n}\Rightarrow F_{-1}^{n}=F_{1}^{n}-2 h^{n}s^{n}\Delta \xi$, whith $s^{n}=s(n\Delta t)$, $h^{n}=h(n\Delta t)$.So the equation becomes

\begin{align}
&-(2+\rho^{n})F_{0}^{n}+2F_{1}^{n} = (2-\rho^{n})F_{0}^{n-1}-2F_{1}^{n-1} + 4\Delta \xi \frac{s^{n}h^{n}+s^{n-1}h^{n-1}}{2} \label{eq:Equation i=0}
\end{align}

Since $F(s(t))=0$, the equation for $i=N$ is $F_{N}^{n}=F_{N}^{n-1}=0$ or 
\begin{align}
-(2+\rho^{n})F_{N}^{n}=(2-\rho^{n})F_{N}^{n-1}=0   \label{eq:Equation i=N}
\end{align}

Let now $R^{n}$, $L^{n}$ $\in \: \mathbb{R}^{(N+1) x (N+1)}$, defined as,

\begin{align}
L^{n}=\left[
\begin{array}{ccccccccc}
-(2+\rho^{n})    &2                 \\
1-\sigma_{1}^{n} &-(2+\rho^{n})    &1+\sigma_{1}^{n} \\
&1-\sigma_{2}^{n} &-(2+\rho^{n})&1+\sigma_{2}^{n}  \\
&                 &\ddots           &\ddots           &\ddots \\
&                 &                 &\ddots           &\ddots          &\ddots \\
&                 &                 &1-\sigma_{N-1}^{n} &-(2+\rho^{n})&1+\sigma_{N-1}^{n}\\
&                 &                 &                   &0            &-(2+\rho^{n})\\
\end{array}
\right],
\label{Matrix:Left matrix}
\end{align}

\begin{align}
R^{n}=\left[
\begin{array}{ccccccccc}
(2-\rho^{n})      &-2                &  \\
-1+\sigma_{1}^{n} & (2-\rho^{n})     &-1-\sigma_{1}^{n} \\
&-1+\sigma_{2}^{n} & (2-\rho^{n})    &-1-\sigma_{2}^{n}  \\
&                  &\ddots           &\ddots           &\ddots  \\
&                  &                 &\ddots           &\ddots           &\ddots \\
&                  &                 &-1+\sigma_{N-1}^{n} & (2-\rho^{n})&-1-\sigma_{N-1}^{n}\\
&                  &                 &                    &0            &(2-\rho^{n})\\
\end{array}
\right],
\label{Matrix:Right matrix}
\end{align}

and $E^{n} \: \in \mathbb{R}^{N+1}$ defined as $E^{n}=[4\Delta \xi \frac{s^{n}h^{n}+s^{n-1}h^{n-1}}{2},0, \dots ,0]^{T}$
\newline
Now, given the temperature distribution $f^{n-1}=[F_{0}^{n-1},F_{1}^{n-1}, \dots ,F_{N}^{n-1}]^{T}$, the equations (\ref{eq:Equation i=0})-(\ref{eq:Equation i=N}) can be written in matrix form

\begin{align}
L^{n}f^{n}=R^{n}F^{n-1}+E^{n} \Rightarrow f^{n}=(L^{n})^{-1}(R^{n}F^{n-1}+E^{n})
\end{align}
for $n=1\dots M$ and $f^{0}=[0,0, \dots ,0]^{T}$ 

\subsubsection{Implementation}
\textbf{temperatureDistribution(T,dx,dt,s,h)}
\begin{itemize}
	\item[] N = $\frac{1}{dx}$ + 1
	\item[] M = $\frac{1}{dx}$ + 1
	\item[] F = zeros(N,M)
	\item[] \textbf{for} n=1:M 
	\begin{itemize}
		\item[] Define $R^n$
		\item[] Define $L^n$
		\item[] Define $E^n$
		\item[] $F(:,n) = (R^n)^{-1}(L^nF(:,n-1)+E^n)$
	\end{itemize}
	\item[] \textbf{end for} 
	\item[] \textbf{return F}
\end{itemize}
\subsection{Numerical Results}\label{Results}
We consider three examples taken from \cite{MiVyn} with the following boundary conditions at $x=0$.
\[{}
 (i)T=e^t-1, \quad  (ii)\frac{\partial{T}}{\partial{x}} = -e^t, \quad  (iii)T= 1-\epsilon \sin(\omega t),
\]
where the parameters $\epsilon$ and $\omega$ in $(iii)$ represent the amplitude and the thermal oscillation. The solution for the first two boundary conditions is known and it is
\[
T = e^{t-x}-1, \quad s(t)=t
\] 
for both of them, provided $\beta=1$. The solution for boundary condition $(iii)$ is unknown.
For the two first examples we compare the approximate solution found using the iterative algorithm with exact solution for both the $s(t)$ and temperature distribution at final time, whilst for the third example we compare the derivative of boundary $s$  with the $\frac{\partial F}{\partial x}$ at $x=s(t)$ as evidence that the Stefan condition at the boundary holds for approximate solution. 

Following the \cite{MiVyn} and \cite{Savovi2003} we define
\[
\Delta\xi_k = 2^{-k}\Delta\xi_0, \quad \Delta\xi_0 = 0.1
\]
\[
\xi_{i,k}=i\Delta\xi_k, \quad i= 0,1,\dots \frac{1}{\Delta\xi_k}.
\]
 
For the examples that the analytical solution is known $T$ we define the error at time $n$  as follows
\[
E_k^n=\left( \Delta\xi_k \sum_{i=0}^{\frac{1}{\Delta\xi_k}}(F(\xi_{i,k},t^n)-T(s^n\xi_{i,k},t^n))^2 \right)^{1/2},
\]
$F$ being the numerical solution and $T$ being the exact. The order of accuracy of the solution is defined as the number 
\[
p=\frac{ln(E_k^n/E_{k+1}^n)}{ln(\Delta\xi_k \ \Delta\xi_{k+1})}
\] 
if that number exists.
\begin{table}[!h]
\centering
\begin{tabular}{ccccc}
          & \multicolumn{2}{c}{Boundary condition (i)}   &\multicolumn{2}{c}{Boundary condition (i)}\\
\cline{2-5}
$\Delta\xi$       & $E^n$        & $p$        & $E^n$                & $p$       \\
\hline
 $1/10$   & $2.21\times 10^{-3}$ &            & $7.03\times 10^{-4}$ &           \\
 $1/20$   & $5.35\times 10^{-4}$ &  $2.050$   & $1.72\times 10^{-4}$ &  $2.024$  \\
 $1/40$   & $1.31\times 10^{-4}$ &  $2.029$   & $4.29\times 10^{-5}$ &  $2.010$  \\
 $1/80$   & $3.22\times 10^{-5}$ &  $2.021$   & $1.06\times 10^{-5}$ &  $2.004$  \\
 $1/160$  & $7.84\times 10^{-6}$ &  $2.041$   & $2.66\times 10^{-6}$ &  $2.002$  \\
\hline
\end{tabular}
\caption{The error of the numerical solution of $F$, fixed at time $t^n=1$ and the order of accuracy for boundary condition $(i)$ and $(ii)$. }
\end{table}

\begin{figure}[!h]
\centering
\includegraphics[width=.48\textwidth]{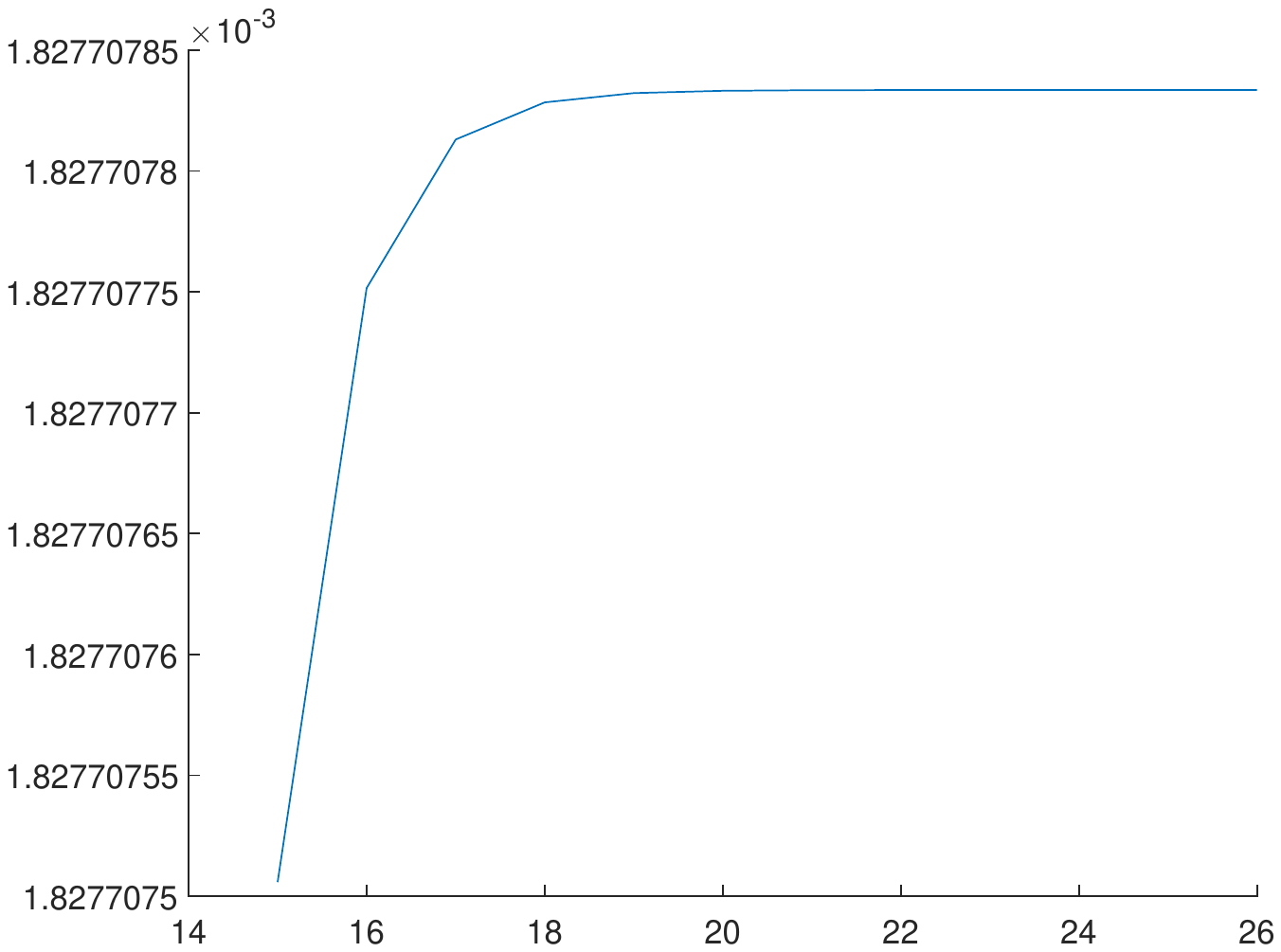}
\includegraphics[width=.48\textwidth]{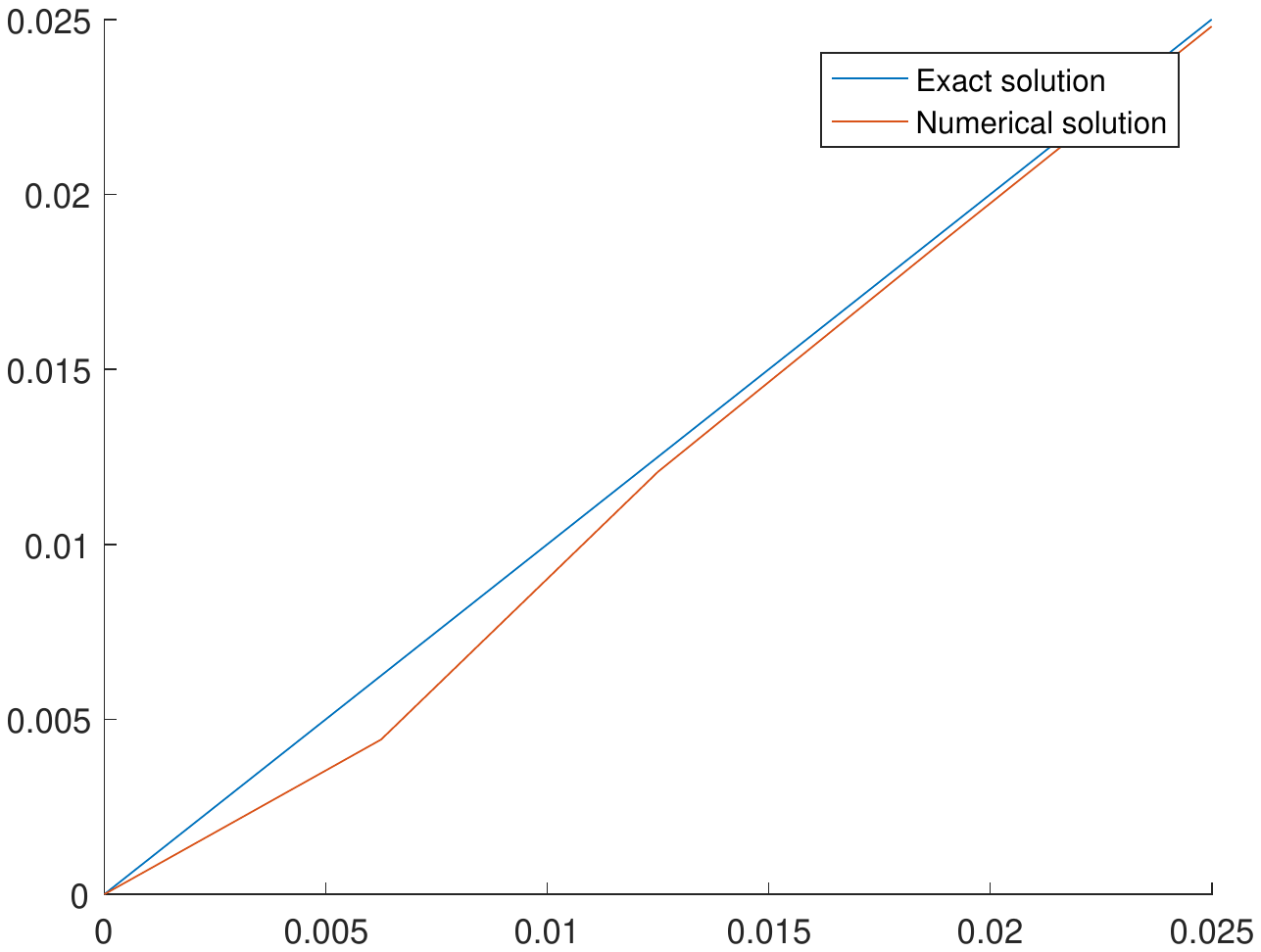}
\caption{The error versus iteration for the boundary found using the iterative method with boundary condition $(i)$. The error is located near the start.}
\label{fig:example1}
\end{figure}

\begin{figure}[!h]
\centering
\includegraphics[width=.48\textwidth]{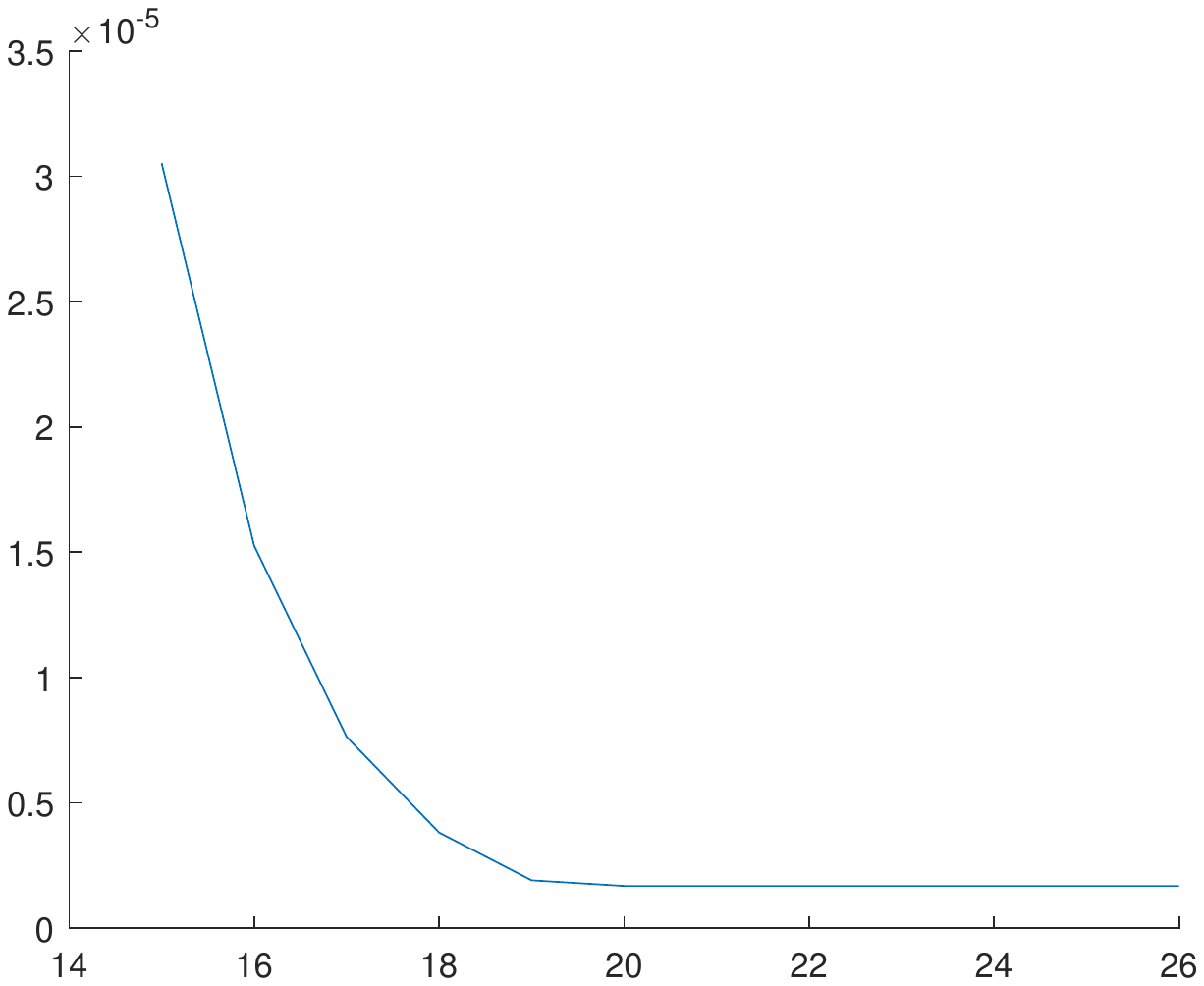}
\includegraphics[width=.48\textwidth]{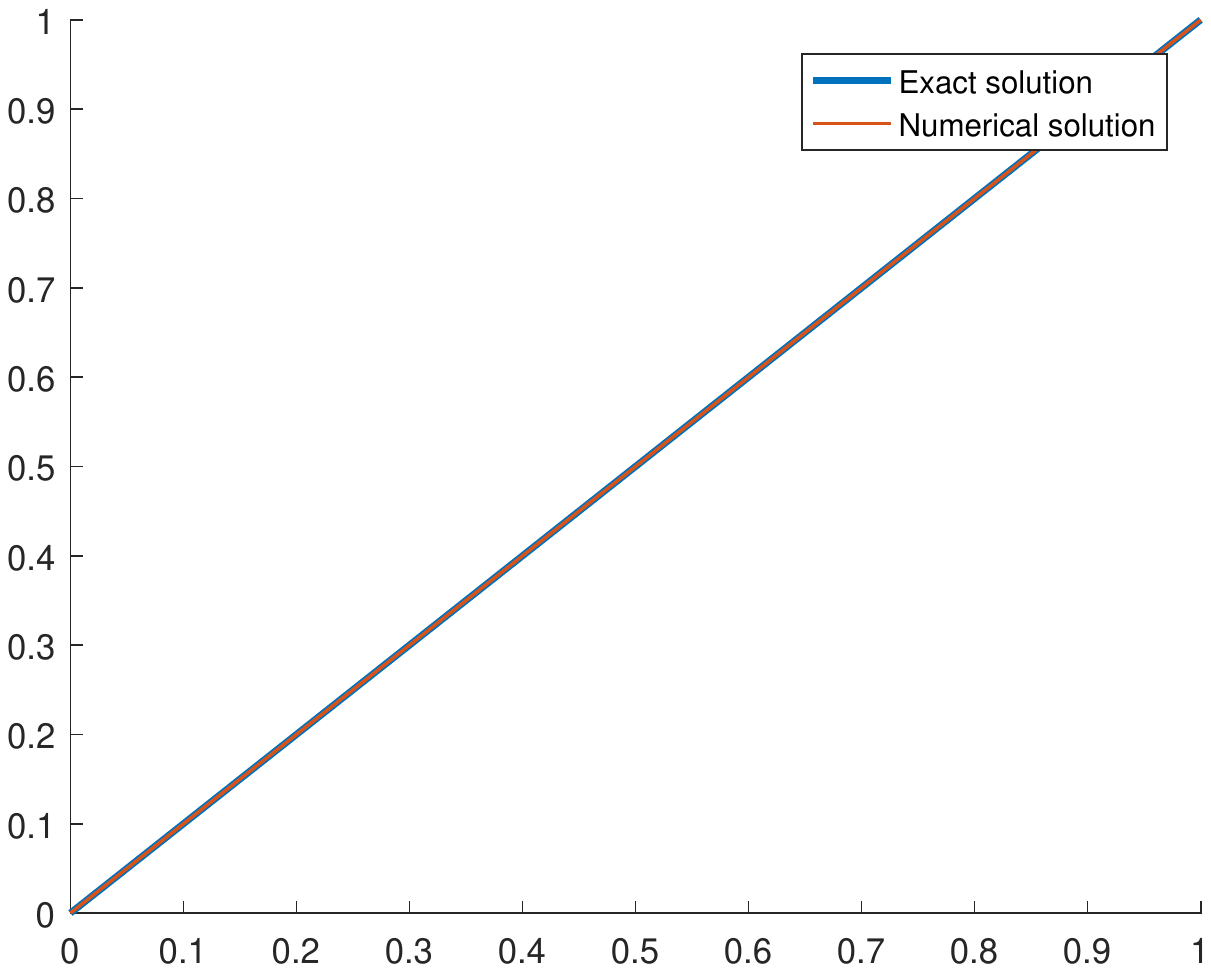}
\caption{The error versus iteration for the boundary found using the iterative method with boundary condition $(ii)$. The approximation is good at all time intervall.}
\label{fig:example2}
\end{figure}

Figure \ref{fig:oscillting boundariesfig:example2} confirmes the oscillating nature of the succesive boundaries. For this example we have removed the average rule and the new boundary have been calculated by straight use of  the rule (\ref{ModRTransformNeum}).

\begin{figure}[!h] 
\centering
\includegraphics[width=.80\textwidth]{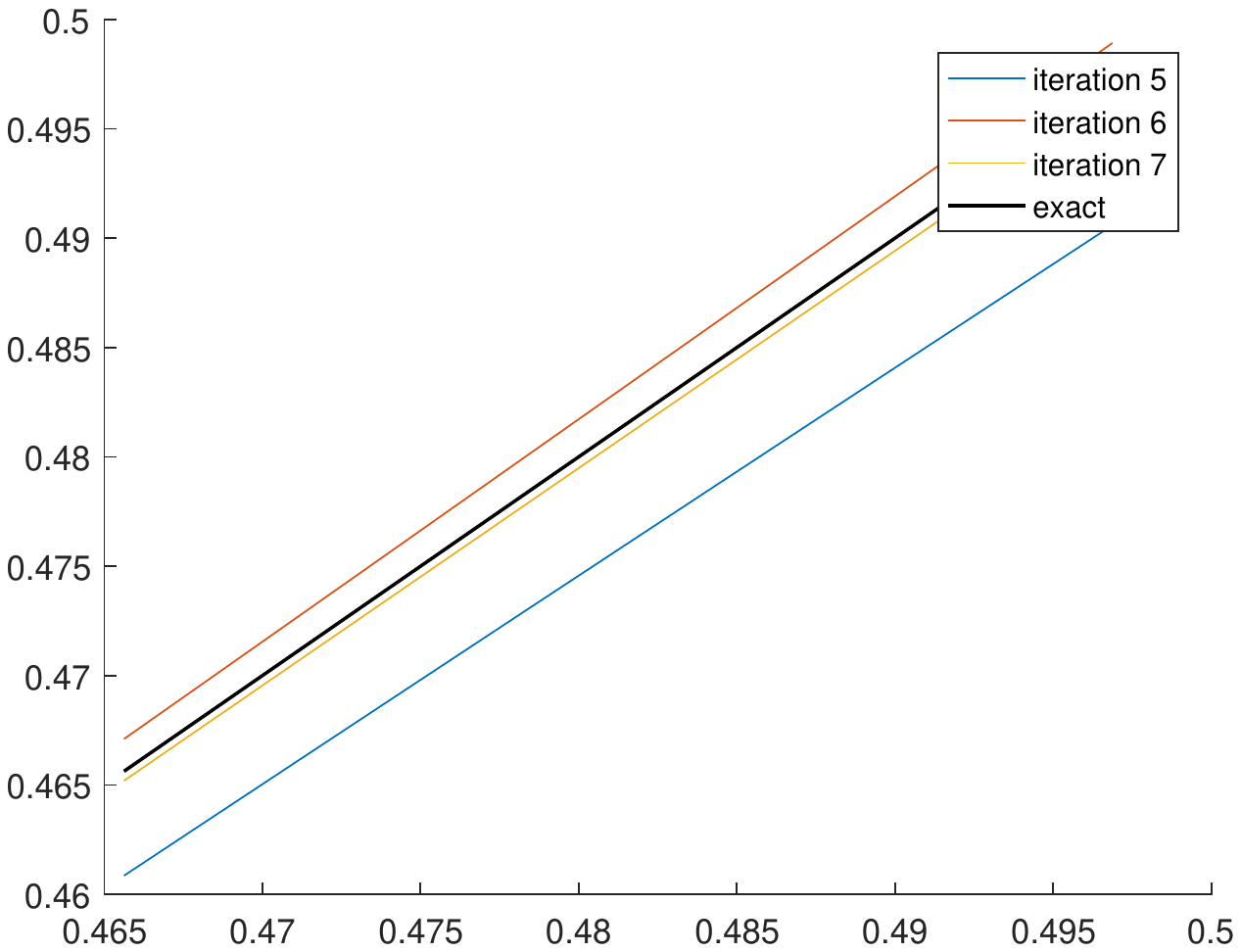}
\caption{Detail from three successive boundaries for example 2. The boundaries oscillate around and converge to the exact.}
\label{fig:oscillting boundariesfig:example2}
\end{figure}

\section{Alternative Iterative Methods}\label{AltIterative}
Given a boundary for the Stefan problem and the corresponding solution $s,U^s$ one might try to improve on the boundary by considering a direct modification that reduces the discrepancy in the Stefan condition $U^s_x(s(t)t)=-\dot{s}(t)$.  A possible measure the discrepancy is a quadratic one \begin{equation}\label{DiscrMeas1}
D_1(s)=\int_0^T(U^s_x(s(t),t)+\dot{s}(t))^2dt
\end{equation}  One can then try to find an alternative boundary $\hat{s}$ on which this discrepancy is reduced, namely set $\hat{s}=y$ where $y$ is the solution of the following calculus of variations problem in $y$ 
\begin{equation}\label{eq:calcvar1}
\inf_y\int_{0}^{T} (U^s_x(y,t)+\dot{y}(t))^2dt\quad y(0)=0\;y(T)\; free
\end{equation}
which however can be solved by inspection.  

An alternative formulation is to use the discrepancy measure
\begin{equation}\label{DiscrMeas2}
D_2(s)=\int_0^T[(U^s_x(s(t),t)+\dot{s}(t))^2+U^s(s(t),t)^2]dt
\end{equation} 
leading to the calculus of variations problem
\begin{equation}\label{eq:calcvar2}
\inf_y\int_{0}^{T}[(U^s_x(y,t)+y'(t))^2+U^s(y,t)^2]dt\quad y(0)=0\quad y(T)\; free,
\end{equation}
In this formulation we are certain that the updated boundary $\hat{s}=y$ has an overall penalty inferior to $\int_{0}^{T}(U^s_x(s(t),t)+\dot{s}(t))^2dt$ because $\int_{0}^{T}(U^s_x(s(t),t)+\dot{s}(t))^2dt=\int_{0}^{T}[(U^s_x(s(t),t)+\dot{s}(t))^2+U^s(s(t),t)^2]dt$ since $U^s(s(t),t)=0$.  

The Euler Lagrange conditions for the first problem \eqref{eq:calcvar1}  is 
\begin{equation}\label{eq:EulerLagr1}
\frac{d^2 y}{dt^2}=U^s_{xx}(y,t)U^s_{x}(y,t)-U^s_{x,t}(y,t)\qquad y(0)=0,y'(T)=U^s_{x}(y(T),T)
\end{equation}
while for \eqref{eq:calcvar2} is the slightly more complicated:
\begin{equation}\label{eq:EulerLagr2}
\frac{d^2 y}{dt^2}=U^s_{xx}(y,t)U^s_{x}(y,t)-U^s_{x,t}(y,t)+U^s_{x}(y,t)U(y,t)\qquad y(0)=0,y'(T)=U^s_{x}(y(T),T)
\end{equation}
It can be easily shown that \eqref{eq:EulerLagr1} leads to the obvious condition
\begin{equation}\label{eq:EulerLagr1a}
\frac{d y}{dt}=-U^s_{x}(y,t)\qquad y(0)=0,y(T)\;free.
\end{equation}
which requires to find a path $y$ on which $U^s$ satisfies the Stefan condition; however on this path $U^s$ does not necessarily vanish, but we hopefully get a boundary closer to the Stefan solution.  The path satisfying \eqref{eq:EulerLagr2} presents a compromise between the goals of the derivative condition and zero boundary temperature.

Solving even \eqref{eq:EulerLagr1} is complicated by the fact that it requires the values of $U^s$ for $x\ge s(t)$ which is an numerically unstable problem. A similar situation was present in the authors' previous work in the context of locating the exercise boundary of an American typo option by improving on a current exercise boundary \cite{AmPut}.  We developed there an alternative approach sidestepping the need to solve outside the $Q_{s,T}$. Solving \eqref{eq:EulerLagr2} in addition introduces a two point boundary value problem on the numerically generated function $U^s$  that must also be known outside $Q_{s,T}$. We have not worked on either the convergence or the numerical properties of those improvements, but they are in the spirit of the usually efficient (super-linear) policy iteration algorithms analyzed in \cite{AmPut} and it would not be a surprise if they prove more efficient than the $\mathcal{P}$ operator ones that are fixed point algorithms with geometric convergence.  

As in our previous work, we consider a \textit{linearized improvement} that does not require $U^s$ except on the boundary $s$ and could be of use in an alternative method. For a fixed boundary  problem with a solution $U^s$ on  $s$ consider a modification $\hat{s}=s+\xi$ with $\xi$ small.  The first order approximations of interest are $U^s_x(s+\xi,t)\approxeq U^s_x(s,t)+U^s_{xx}(s,t)\xi,\quad U^s(s+\xi,t) \approxeq U^s(s,t)+U^s_x(s,t)\xi=U^s_x(s,t)\xi$. Then we  can write the first order analog of \eqref{eq:calcvar1} as 
\begin{equation}\label{eq:calcvarappr1}
\inf_\xi\int_{0}^{T} (\dot{s}+\dot{\xi}+U^s_x(s,t)+U^s_{xx}(s,t)\xi)^2dt\quad \xi(0)=0\;\xi (T)\;free
\end{equation}
The solution $\xi$ satisfies the obvious differential equation (equivalent to the Euler Lagrange conditions)
\begin{equation}\label{eq:EulerLagrAppr1}
\xi''+\beta(t)\xi+\alpha(t)=0\quad\xi(0),\xi(T)\;free
\end{equation}

The coefficients are given in terms of $U^s(s(t),t)$ and its derivatives, in particular $\beta(t)=U^s_{xx}(s(t),t),\;\alpha(t)=\dot{s}+U^s_{x}(s(t),t)$.  The boundary update could be of the form $\hat{s}=s+\epsilon\xi$ with $\epsilon$ in [0,1] and we can show that for small enough $\epsilon$ we indeed get an improvement.  Consider the discrepancy measure $D_2$ in \eqref{DiscrMeas2} for $s+\epsilon\xi$ $$\int_0^T[(U^s_x(s+\epsilon\xi,t)+\dot{s}+\epsilon\dot{\xi})^2+U^s(s+\epsilon\xi,t)^2]dt$$
Using \eqref{eq:EulerLagrAppr1} we have $$\dot{s}+\epsilon\dot{\xi}+U^s_x(s+\epsilon\xi,t)=\dot{s}+\epsilon(-\dot{s}-U^s_x(s,t)-U^s_{xx}(s,t)\xi)+U^s_x(s,t)+U^s_{xx}(s,t)\epsilon\xi+o(\epsilon^2)=(1-\epsilon)(\dot{s}+U^s_x(s,t))+o(\epsilon^2)$$ and also $$U^s(s+\epsilon\xi,t)=U^s(s,t)+U^s_x(s,t)\epsilon\xi+o(\epsilon^2)=U^s_x(s,t)\epsilon\xi+o(\epsilon^2).$$Hence the quadratic discrepancy for $s+\epsilon\xi$ is \begin{equation}
(1-\epsilon)^2\int_{0}^{T}[ (\dot{s}+U^s_x(s,t))^2+\epsilon^2\xi^2U^s_x(s,t)+o(\epsilon^4)]dt=(1-2\epsilon)\int_{0}^{T} (\dot{s}+U^s_x(s,t))^2dt+o(\epsilon^2)
\end{equation}
This shows that the discrepancy measure $D_2$ is reduced multiplicatively, and thus this boundary updating algorithm will succeed if properly applied.  A similar treatment can be carried out for \eqref{eq:calcvar2}.  
\section{Conclusions}\label{Conclusions}
Concerning the  numerical results presented in \ref{Results} we observe a consistent high speed convergence, not entirely justified by our rather conservative results.  It would be of some interest to extend our theoretical results to explain this fortuitous situation. 

\section*{References}
\bibliographystyle{plain}
\bibliography{mybibStefan}
\end{document}